\documentclass[11pt]{article}
\usepackage{amsfonts}
\usepackage{mathrsfs}
\usepackage{amsmath}
\usepackage{amssymb}
\usepackage{graphicx}
\graphicspath{{figure/}}
\usepackage{epic}
\usepackage{amsthm,latexsym}
\renewcommand{\paragraph}{\roman{paragraph}}
\newtheorem{theorem}{\scshape \mdseries  Theorem}[section]

\newtheorem{lemma}[theorem]{\scshape \mdseries  Lemma}

\newtheorem{conj}[theorem]{\scshape \mdseries  Conjecture}

\begin{document}

\title{\sf On automorphism group of the reduced finitary power\\ monoid of the additive group of integers   }
\author{ Dein Wong \thanks{Corresponding author, E-mail address:wongdein@163.com. Supported by  NSFC of China (No.12371025). School of Mathematics, China University of Mining and Technology, Xuzhou,  China. },\ \ \ \ \
 \ Songnian Xu \thanks{School of Mathematics, China University of Mining and Technology, Xuzhou,  China. },\ \ \ \ \ Chi Zhang \thanks{School of Mathematics, China University of Mining and Technology, Xuzhou,  China.},\ \ \ \
Zhijun Wang}
%zclqq32@cumt.edu.cn   %707314493@qq.com  zhjxing@imu.edu.cn
\date{}
\maketitle
\noindent {\bf Abstract:}\ \  For a semigroup $S$ with a binary operation $``+"$ (written additively),  denote by $\mathcal{P}_{{\rm fin}} (S)$ the semigroup consisting of all finite nonempty subsets of $S$ and endowed with the setwise binary addition
$$X+Y=\{x+y:x\in X, y\in Y\},$$ which is called the finitary power semigroup of $S$. When $S$ is   monoid with zero $0$,    by $\mathcal{P}_{{\rm fin,} 0}(S)$ we denote  the monoid  consisting of all finite nonempty subsets of $S$ containing $0$, which is  the reduced finitary power monoid of $S$ with the singleton  $\{0\}$ as  zero-element.

 Let $\mathbb{Z}$ be the additive group of all integers and $\mathbb{N}$ the sub-monoid of $\mathbb{Z}$ of all non-negative integers.  For a finite subset $X$ of $\mathbb{Z}$, we denote by   ${\rm max}\ X$ the    maximum member   in $X$.
    %Recently,  Tringali and Yan (\cite{tri2}, J. Combin. Theory Ser. A, 209(2025))  proved that the only non-trivial automorphism of $\mathcal{P}_{{\rm fin,} 0}(\mathbb{N})$
%is the involution $X \mapsto \beta(X) - X$, and they posed a conjecture: {\it The automorphism group of the reduced power monoid $\mathcal{P}_{{\rm fin,} 0}(S)$ of a numerical
%monoid $S$ properly contained in $\mathbb{N}$  must be  the identity}.
 Recently,  Tringali and Yan (\cite{tri2}, J. Comb. Theory, Ser. A, 209(2025))  proved that the only non-trivial automorphism of $\mathcal{P}_{{\rm fin,} 0}(\mathbb{N})$
 is the involution $X \mapsto  {\rm max}\ X - X$.
 Following up on the result  in  \cite{tri2}, Tringali and Wen \cite{triwen} proved that  the automorphism group of the power monoid  $\mathcal{P}_{\rm fin}(\mathbb{Z})$ is isomorphic to $\mathbb{Z}_2  \times {\rm Dih}_{\infty}$, where  ${\rm Dih}_{\infty}$ refers to the  infinite dihedral group. At the end part of \cite{triwen}, Tringali and Wen left a conjecture as follows:
 {\it The only non-trivial automorphism of the reduced finitary power monoid of
$(\mathbb{Z},+)$  is given by $X\mapsto -X$.}
In the present paper, we aim to give a positive proof for the above conjecture.
\vskip 3mm
\noindent{\bf 2020 Mathematics Subject Classification:}  08A35, 11P99, 20M13, 20M14.
 \vskip 2mm
 \noindent{\bf Keywords:}     Automorphism group; Power semigroup;  Sumset.

\section{Introduction}
\quad \quad  Let $H$ be a  semigroup. Denote by $\mathcal{P}_{\rm fin}(H)$ the  (finitary) power semigroup of
$H$, consisting of all  finite non-empty subsets of $H$ and endowed with the setwise binary operation $$   XY := \{xy :\  x \in X, y \in  Y\}.$$
Moreover, if $H$ is a monoid with the  identity $1$,  we denote by $\mathcal{P}_{{\rm fin},1}(H)$ the set of finite subsets of $H$   containing
  $1$; it is a submonoid of $\mathcal{P}_{\rm fin}(H)$ with identity $\{1\}$, henceforth called the reduced (finitary) power monoid of $H$.

 Power semigroups  were first systematically studied by Shafer and Tamura \cite{sha1} in
the late 1960s. A central question of this field is the so called ``Isomorphism Problem
For Power Semigroups": {\it Whether, for semigroups $S$ and $T$ in a certain class $\mathcal{O}$, an isomorphism
between  $\mathcal{P}(S)$ and  $\mathcal{P} (T)$ implies that $S$ and $T$ are isomorphic?}. Although this was answered in the
negative by Mogiljanskaja \cite{mog} for the class of all semigroups,  several other classes have been found for which the answer is positive (see \cite{gan,gar,gou,sha1,tri1}).
Moreover, the isomorphism problem for the
reduced finitary power monoid was answered in the positive for the class of rational Puiseux
monoids and torsion groups \cite{tri3, tri4} and in the negative for commutative valuation monoids
\cite{rag0}. Recently, power monoids   have received more attention. One reason is that  power monoids  provide a natural algebraic framework for a number of important problems in additive combinatorics and related fields  \cite{sar} on the “additive
irreducibility” of the set of quadratic residues modulo a sufficiently large prime. Also, power semigroups    play a central role in the theory of automata and formal languages \cite{alm}.
In recent years, power semigroups  were  investigated from multiple new perspectives   in a series of papers,  such as primality and atomicity \cite{agg};  arithmetic property \cite{ant};    algebraic properties  \cite{bie, dan};  factorization property     \cite{cos, cos1, fan, tri}.

Automorphism groups of   discrete systems, especially  graphs, play an important role when studying the interplay between
group theory and combinatorial theory.
   Early works on automorphism group  of a discrete object   can be traced back to the 1940s, when Hua \cite{hua, hua0} established the theory of matrix geometry, studying   automorphism groups of some graphs defined over matrix spaces, and   Chow \cite{cho} proved that the automorphism group of the Grassmann graph $G(n,k)$ is isomorphic to the projective general linear group $PGL(V)$ if $n\not=2k$.
    % Tang and Wan \cite{tan} determined the automorphism groups of the symplectic graphs over non-degenerate symplectic spaces.

This year, some attention was concentrated on automorphism groups of power monoids or reduced power monoids of certain additive semigroups. Tringali and Yan \cite{tri2}   proved that:
 \begin{theorem}  {\rm (Theorem 3.2, \cite{tri2})} The only   automorphisms of $\mathcal{P}_{\rm fin,0}(\mathbb{N})$ is either the identity or the
 involution $\sigma_0: X \mapsto {\rm max}\ X - X$, where ${\rm max}\ X$ is the maximum member in $X$.\end{theorem}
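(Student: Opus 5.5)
We need to show: every automorphism $f$ of $\mathcal{P}_{\rm fin,0}(\mathbb{N})$ is either the identity or $\sigma_0$. The plan is to pin $f$ down in three stages: the two-element sets $\{0,n\}$, then the three-element sets $\{0,1,n\}$, and finally all sets by induction on $\max X$.

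First, collect invariants that any automorphism must preserve, since each is defined purely by the monoid operation. The identity $\{0\}$ is fixed and the monoid has no nontrivial units. Atoms (irreducibles) go to atoms. The order-like relation ``$Y$ divides $X$'' is preserved. An easy check shows that $\max(X+Y)=\max X+\max Y$, and that $\max X$ is the maximal length of a factorization of $X$ into non-identity elements: any factorization has at most $\max X$ non-identity factors, each having max $\ge1$, and $X=\{0,1\}+\cdots$ fails in general, but the bound is attained e.g.\ for intervals. I would rather use the more robust fact that $|X+Y|\ge |X|+|Y|-1$ and that $\{0,1\}$ is the unique element $A$ with $A+A$ having exactly $3$ elements and being the smallest nontrivial element with respect to the divisibility structure. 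So I would first prove $f(\{0,1\})=\{0,1\}$. The reason is that $\{0,1\}$ is characterized as the unique non-identity element $A$ such that $kA$ is an atom-free chain of divisors with $kA=\{0,\dots,k\}$. Alternatively, it is characterized by being the only $A$ with $A+X=X$ for all sufficiently ``dense'' $X$ containing $A$, namely intervals.

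Next, determine $f$ on the two-element sets $\{0,n\}$. These are exactly the elements $X$ with $|X|=2$, and I want a purely algebraic description of them. The plan is to use that $2X=X+X$ has $3$ elements iff $X$ is an arithmetic progression, together with $\max$ additivity, so that $\max f(X)=\max X$. This additivity should follow from $\{0,1\}$ being fixed and $\max X=\min\{k: X \mid k\{0,1\}\}$, since $X\subseteq\{0,\dots,k\}$ forces $X+\{0,\dots,k-\max X\}=\{0,\dots,k\}$. Using $|X|$-type invariants (for instance, the number of elements $Y$ with $X+Y=\{0,\dots,k\}$), I would deduce $f(\{0,n\})=\{0,n\}$, because $\{0,n\}$ is the unique set with max $n$ and minimal ``size'', and size should be detectable from the growth of $|kX|$ via divisibility counts.

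Then handle $\{0,1,n\}$. With max fixed and the pairs fixed, the map restricted to sets of max $n$ must be compatible with $\{0,n\}+Y$. The step $X\mapsto \max X-X$ commutes with all of this, which accounts for the two options. I would show that $f(\{0,1,n\})$ is either $\{0,1,n\}$ or $\{0,n-1,n\}$, and that the choice is uniform in $n$ by comparing sums like $\{0,1,n\}+\{0,1\}$. Composing with $\sigma_0$ if necessary, I may then assume $f$ fixes all of these.

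Finally, argue by induction on $\max X$ that $f(X)=X$ for every $X$, using sums with $\{0,1\}$ and $\{0,1,n\}$ to recover $X$ from data about sets of smaller maximum. I expect this last step to be the main obstacle: since most sets are atoms, $X$ cannot be decomposed directly. I would handle it by distinguishing $X$ via the sets $X+\{0,k\}$, which have the same max shifts and whose images are known up to the pattern of which $Y$ divide them. Concretely, I would show that $X$ is determined by the family $\{X+\{0,k\}: k\}$ together with divisibility relations involving sets of smaller maximum, which are already fixed by induction.
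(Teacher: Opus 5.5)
The paper does not prove this statement. It is quoted from Tringali--Yan and used as a black box in Lemma 2.3(v). Judged on its own, your proposal has a genuine gap, and the decisive one is the last step, which is the real content of the theorem.

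\textbf{The final induction.} An induction on $\max X$ cannot work as you describe it. If $X$ is an atom, its only divisors are $\{0\}$ and $X$, and $X$ divides nothing of smaller maximum. So there are no divisibility relations with already-treated sets to exploit. The sets $X+\{0,k\}$ have \emph{larger} maximum, so the induction hypothesis says nothing about them. Moreover $f(X+\{0,k\})=f(X)+\{0,k\}$ is exactly as unknown as $f(X)$. That $X$ can be read off from $X+\{0,k\}$ for $k>\max X$ is true but useless.

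Note also that $\sigma_0$ fixes every interval $[\![0,n]\!]$ and every $\{0,n\}$, and preserves maxima and gap lengths. So a closing argument must genuinely use your normalization, and yours does not say how.

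The missing idea is to induct on boxing dimension instead. Adding $[\![0,d]\!]$, with $d$ the number of integers missing in some gap of $X$, merges blocks and strictly lowers the boxing dimension. Running the induction on ${\rm b.dim}(X)+{\rm b.dim}(f(X))$, as in the proof of Theorem 1.3, the set $X+[\![0,d]\!]$ is then fixed. The equality $X+[\![0,d]\!]=f(X)+[\![0,d]\!]$ gives a contradiction when the first discrepancy between $X$ and $f(X)$ is a left endpoint of a block (Case I there). When it is a right endpoint, $\sigma_0$ genuinely survives (e.g.\ $\{0,2,3\}\mapsto\{0,1,3\}$). That is where your symmetry-breaking normalization has to be used.

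\textbf{The earlier steps.} These are also unjustified as written.

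None of your characterizations of $\{0,1\}$ holds:
\begin{itemize}
\item $|2\{0,n\}|=3$ for every $n$, and cardinality is not yet known to be invariant.
\item $\{0,1\}$ does not divide $\{0,2\}$, and every atom is minimal among non-units, so divisibility minimality does not single it out.
\item $A+X=X$ is impossible for a non-unit $A$, since $\max(A+X)=\max A+\max X$.
\end{itemize}
One characterization that does work: $\{0,1\}$ is the only non-unit $A$ such that every element divides some $kA$ and no $kA$ is a sum of more than $k$ non-units. Indeed, if $\{0,1\}\mid kA$ then $1,\max A-1\in A$. Hence $kA=[\![0,k\max A]\!]$ for large $k$, which is a sum of $k\max A$ copies of $\{0,1\}$, forcing $\max A=1$.

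Your formula $\max X=\min\{k: X\mid k\{0,1\}\}$ is false: $X=\{0,2\}$ gives $3$. The claim behind it is also false, since $\{0,5\}+\{0,1\}\neq[\![0,6]\!]$. The correct identity is $X+[\![0,k]\!]=[\![0,k+\max X]\!]$ for $k\ge\max X$, exactly as in Lemma 2.1.

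For two-element sets you need no ``size'' invariant. For $n\ge 2$, $\{0,n\}$ is the only $X$ with $\max X=n$ such that $X+[\![0,n-2]\!]$ is not an interval. This property is preserved once intervals and maxima are.

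The same gap argument gives $f(\{0,1,n\})\in\{\{0,1,n\},\{0,n-1,n\}\}$. However, your claim that the choice is uniform in $n$ comes with no relation linking different values of $n$.
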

 The involution  $\sigma_0$ is interesting in the following sense. Every automorphism $f$ of a monoid $H$ can be canonically
extended to an automorphism of $\mathcal{P}_{\rm fin}(H)$  or $\mathcal{P}_{{\rm fin}, 1}(H)$,
$$ X \mapsto f[X]:= \{f(x): x \in X\},\ \ \  \forall  X\in  \mathcal{P}_{\rm fin}(H) \   {\rm or}\ X\in \mathcal{P}_{\rm fin,1}(H),$$ which is referred to  an inner automorphism  of $\mathcal{P}_{\rm fin}(H)$  or $\mathcal{P}_{\rm fin,1}(H)$. Thus the question: {\it Whether $\mathcal{P}_{\rm fin}(H)$  (resp., $\mathcal{P}_{{\rm fin},1}(H)$) has non-inner automorphisms?} is raised naturally. One can see that  the involution $\sigma_0: X \mapsto {\rm max}\ X - X$ for $\forall\ X\in  \mathcal{P}_{{\rm fin},0}(\mathbb{N})$ is not an inner automorphism  of
$\mathcal{P}_{{\rm fin},0}(\mathbb{N})$.

Following up on Theorem 1.1, Tringali and Wen \cite{triwen} proved that  the automorphism group of the finitary power monoid  $\mathcal{P}_{\rm fin}(\mathbb{Z})$ is isomorphic to $\mathbb{Z}_2  \times {\rm Dih}_{\infty}$, where  ${\rm Dih}_{\infty}$ refers to the  infinite dihedral group. At the end part of \cite{triwen}, Tringali and Wen left a conjecture as follows:
 \begin{conj} The only non-trivial automorphism of the reduced finitary power monoid of
$(\mathbb{Z},+)$  is given by $X\mapsto -X$. \end{conj}
In this article, we obtain the following theorem, which   gives a positive proof for the above conjecture.
\begin{theorem} The only non-identity  automorphism of the reduced finitary power monoid of
$(\mathbb{Z},+)$  is given by $X\mapsto -X$ for any $X\in \mathcal{P}_{\rm fin,0}(\mathbb{Z})$.
\end{theorem}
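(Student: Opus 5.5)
Let $M=\mathcal{P}_{\rm fin,0}(\mathbb{Z})$ and let $f$ be an automorphism of $M$. After possibly composing $f$ with $X\mapsto -X$, we aim to show that $f$ is the identity. The strategy is to find monoid-theoretic invariants that $f$ must preserve, and then pin $f$ down on a generating family of small sets.

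\textbf{Step 1: invariants.} Every $X\in M$ has a \emph{width} $\max X-\min X$, a \emph{positive part} $\max X\ge 0$ and a \emph{negative part} $-\min X\ge 0$. For sums we have $\max(X+Y)=\max X+\max Y$ and $\min(X+Y)=\min X+\min Y$, so the map $X\mapsto(\max X,-\min X)\in\mathbb{N}^2$ is a monoid homomorphism. We would characterize these quantities intrinsically. The units of $M$ are only $\{0\}$. The atoms include the two-element sets $\{0,a\}$, and more generally we look at the length structure of factorizations. The key invariant is $\sup$ of the factorization lengths of $X$: since every nontrivial element of $M$ increases $\max-\min$ by at least $1$, we get $\sup L(X)\le \max X-\min X$, with equality attained by writing $X$ as a sum of copies of $\{0,1\}$ and $\{-1,0\}$ when $X$ is an interval. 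This makes it plausible that the elements of maximal length, and the sets $\{0,1\}$ and $\{-1,0\}$ themselves, are characterized as the atoms $A$ for which the multiples $nA$ have unboundedly long factorizations in a controlled way. In fact $\{0,1\}$ and $\{-1,0\}$ should be the only atoms with $\sup L(nA)=n$ and width $1$.

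\textbf{Step 2: fix the generators.} From Step 1, $f$ either fixes $\{0,1\}$ and $\{-1,0\}$ or swaps them, and after composing with negation we may assume it fixes both. Next we show that $f(\{0,a\})=\{0,a\}$ for every $a$. For $a>0$ we use the identity $\{0,1\}+\{0,a\}$ and its divisibility relations, for instance $\{0,1,\dots,a\}+\{0,a\}=\{0,1,\dots,a\}+\{0,1,\dots,a\}$-type relations, and argue by induction on $|a|$ together with preserved width and sign data. We also need $f$ to preserve the homomorphism $X\mapsto(\max X,-\min X)$, which follows because this map is determined by the images of the two fixed generators via a divisibility/length argument: $\max X$ equals the largest $n$ such that $X$ divides $n\{0,1\}+m\{-1,0\}$ appropriately. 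From this we obtain that $f$ preserves $\max$ and $\min$.

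\textbf{Step 3: $f$ is the identity on all of $M$.} Finally we show $f(X)=X$ for every $X$. Here the plan is to detect membership: $a\in X$ if and only if some divisibility or cancellation relation holds, for example comparing $X+\{0,a\}$-type sums, or using the fact that for $N$ large enough, $X+N\{-1,0\}+N\{0,1\}$ is an interval, while $X+\{0,b\}$ determines elements by counting. I would try the relation
\[
X+[-N,N]\ \text{ versus }\ \{0,a\}+[-N,N],
\]
checking whether $\{0,a\}+[-N,N]$ divides $X+[-N,N]$, but I expect that this characterization of membership is the main obstacle. The Tringali--Yan argument for $\mathbb{N}$ (Theorem 1.1) handles the analogous step inductively on $\max X$. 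So I would imitate it: restrict $f$ to the submonoid of sets with $\min X=0$, which is $\mathcal{P}_{\rm fin,0}(\mathbb{N})$ and which $f$ preserves by Step 2. Theorem 1.1 then gives that $f$ is either the identity or $\sigma_0$ there, and $\sigma_0$ is excluded because it moves $\{0,1,3\}$ while the relations from Step 2 mixing positive and negative generators should force $f(\{0,1,3\})=\{0,1,3\}$. The same holds on the sets with $\max X=0$. A general $X$ factors through these two pieces only up to the relations of Step 2; to finish, we use $X+\{-m,0\}$ with $m=-\min X$ to shift into the nonnegative-type setting where $f$ is already known.
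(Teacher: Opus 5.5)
There are two genuine gaps: your Step 1 does not establish that $\{0,1\}$ and $\{-1,0\}$ are fixed or swapped, and your Step 3 does not handle sets straddling $0$.

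\textbf{Step 1.} The invariant you propose cannot single out $\{0,1\}$ and $\{-1,0\}$. For $a\neq 0$ the dilation $X\mapsto aX$ is an injective homomorphism with divisor-closed image: if $Y+Z\subseteq a\mathbb{Z}$ with $0\in Y\cap Z$, then $Y,Z\subseteq a\mathbb{Z}$. So the factorizations of $n\{0,a\}$ are exactly the dilates of those of $[\![0,n]\!]$, and $\sup L(n\{0,a\})=n$ as well. Adding ``width $1$'' is circular, because you do not yet know that $f$ preserves width. Likewise, your Step 2 recovery of $\max X$ via divisibility presupposes that the two generators are already fixed.

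The missing idea is the paper's Lemma 2.1, which is valid for an arbitrary automorphism. For $k\ge\max\{x_-,x_+\}$ one has $X+k\{-1,0\}+k\{0,1\}=(k+x_-)\{-1,0\}+(k+x_+)\{0,1\}$. Applying $f$ and comparing extremes gives $\max X^f=dx_-+bx_+$ and $\min X^f=-cx_--ax_+$, where $a,b,c,d\in\mathbb{N}$ are read off from $\{0,1\}^f$ and $\{-1,0\}^f$. Solving this system over $\mathbb{N}$ for the preimage of $\{0,1\}$, and then of $\{-1,0\}$, forces $f$ or $X\mapsto(-X)^f$ to fix both sets. Such a map then preserves $\min$ and $\max$.

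\textbf{Step 3.} This is the more serious gap, and it is the core of the theorem: sets $A$ with $\min A<0<\max A$. The reduced monoid has no translations. Thus $X+\{-m,0\}=(X-m)\cup X$ still has minimum $-2m<0<\max X$ and never lands in $\mathcal{P}_{\rm fin,0}(\mathbb{N})$. Moreover, $\mathcal{P}_{\rm fin,0}(\mathbb{N})\cup\mathcal{P}_{\rm fin,0}(-\mathbb{N})$ does not generate the monoid: the atom $\{-1,0,2\}$ lies in neither half. So knowing $f$ on the halves and on intervals does not propagate to such sets by additivity.

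The paper handles them by induction on $\mathrm{b.dim}(A)+\mathrm{b.dim}(A^f)$. It writes $A$ and $A^f$ (same $\min$, same $\max$) as unions of maximal intervals. At the first endpoint where they differ, it adds either $[\![0,d]\!]$, with $d$ the length of the preceding gap, or $\{0\}\cup[\![-\min A+a_{2u+1}+1,c]\!]$ with $c$ large. These summands lie in $\mathcal{P}_{\rm fin,0}(\mathbb{N})$ and are fixed. The resulting sum has smaller total boxing dimension, so it is fixed by induction. Then an explicit element ($a_{2u}$, respectively $b_{2u+1}+1$) lies in one of the two sums but not the other, a contradiction.

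\textbf{Excluding $\sigma_0$.} You only assert this (``should force''). The paper's concrete argument runs as follows. First, $\{-1,0,2\}$ is fixed, because the only other set with the same extremes, $\{-1,0,1,2\}=\{-1,0\}+\{0,2\}$, is not an atom. Second, $[\![-1,5]\!]$ is fixed. Finally, $\{-1,0,2\}+\{0,2,3\}=[\![-1,5]\!]\neq\{-1,0,2\}+\{0,1,3\}$, which rules out $\sigma_0$.
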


\section{Proof of the main theorem}
\quad\quad Let $\mathbb{Z}$ be the additive group of all integers and $\mathbb{N}$ the sub-monoid of $\mathbb{Z}$ of all non-negative integers. If $i\leq j \in \mathbb{Z}$, we define  $[\![i ,j ]\!] $
to be the discrete interval $\{x \in\mathbb{Z}: i \leq  x \leq  j\}$.  Given $l\in \mathbb{Z}$  and $X\subset\mathbb{Z}$, we set
$$\begin{array}{ccc}l+ X&=& \{l+x: x\in X\};\ \ \  \ \  \ \ \ \ \ \ \ \ \ \ \ \ \ \ \ \ \ \ \ \ \ \ \ \ \ \ \\
 l- X&=& \{l-x: x\in X\}. \ \ \ \ \ \ \ \ \ \ \ \ \ \ \ \ \ \ \ \ \ \ \ \ \ \  \ \ \ \ \ \\
\end{array}$$
 Given  $k\in \mathbb{N}$  we use $kX$ for the $k$-fold sum of $X$, that is  $$kX= \{x_1 + \cdots + x_k : x_1, \ldots, x_k\in X\}.$$

It is easy to see that the automorphism group of $(\mathbb{Z},+) $ contains two elements: one is the
identity automorphism, the other one is given by $x \mapsto -x$ for any $x\in \mathbb{Z}$. Thus  $  \mathcal{P}_{\rm fin,0}(\mathbb{Z})$ has two inner
automorphisms, the only non-identity automorphism, written
as $f_0$, sends any $X\in  \mathcal{P}_{\rm fin,0}(\mathbb{Z})$ to $-X$.
Particularly, $f_0$ permutes $\{-1,0\}$ with $\{0,1\}$. For any     automorphism $f$ of $\mathcal{P}_{{\rm fin},0}(\mathbb{Z})$, we denote $f_0 f$ by $\widehat{f}$ and call it the {\it reversal} of $f$.
Thus, $\widehat{f}$ sends any finite subset $X$ of $\mathbb{Z}$ to $(-X)^f$.

For a finite subset $X\subset \mathbb{Z}$, denote by ${\rm min}\ X$ the minimum member in $X$. Let $f$ be a given automorphism of $\mathcal{P}_{{\rm fin},0}(\mathbb{Z})$ and suppose that $$ {\rm min}(\{0,\ 1\}^f)=-a,\ \ \ \ \   {\rm max} (\{0,\ 1\}^f)=b, \ {\rm where}\ a, b\in \mathbb{N},\  a+b>0;$$
$$ {\rm min} (\{0,-1\}^f)=-c,\ \ \ \ {\rm max} (\{0,-1\}^f)=d,\   {\rm where}\ c, d\in \mathbb{N},\  c+d>0.$$
We study how ${\rm min}(X^f)$ and ${\rm max}(X^f)$ associate to $a, b, c, d$.
\begin{lemma}  For any $X\in     \mathcal{P}_{{\rm fin},0}(\mathbb{Z})$ with ${\rm min }\ X= -x_-$ and ${\rm max}\ X= x_+$, we have $$ \left\{\begin{array}{ccccc} {\rm min} (X^f)&=&-cx_-&-&ax_+,\\
 {\rm max}(X^f)&=& dx_-&+&bx_+.\end{array}\right. $$
\end{lemma}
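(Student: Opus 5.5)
The plan is to express $X$, up to a suitable ``sandwich'', through sums of copies of $\{0,1\}$ and $\{0,-1\}$, and to use that $\min$ and $\max$ are additive over sumsets: $\min(Y+Z)=\min Y+\min Z$ and $\max(Y+Z)=\max Y+\max Z$. For $I:=[\![-x_-,x_+]\!]$ we have
$$I=x_-\{0,-1\}+x_+\{0,1\}.$$
Since $f$ is a homomorphism, $I^f=x_-\{0,-1\}^f+x_+\{0,1\}^f$. Hence $\min I^f=-cx_--ax_+$ and $\max I^f=dx_-+bx_+$, so the formula holds for full intervals.

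For a general $X$ with $\min X=-x_-$ and $\max X=x_+$, I would use an absorption identity. For $n$ large (for instance $n\ge x_-+x_+$) we have
$$X+n\{0,1\}+n\{0,-1\}=[\![-x_--n,\,x_++n]\!],$$
because $n\{0,1\}+n\{0,-1\}=[\![-n,n]\!]$ and $X+[\![-n,n]\!]$ covers every gap in $X$ once $n$ exceeds the largest gap. Applying $f$ and taking $\max$ gives
$$\max X^f+n(b+d)=d(x_-+n)+b(x_++n),$$
so $\max X^f=dx_-+bx_+$. The same computation with $\min$ gives $\min X^f=-cx_--ax_+$. Here I have used that $\max(kY)=k\max Y$ and that $\min$ and $\max$ are additive over sums of finite sets of integers.

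The verification of the covering identity is routine: every integer between $-x_--n$ and $x_++n$ lies within distance $n$ of some element of $X$, since consecutive elements of $X$ differ by at most $x_-+x_+\le 2n$. I do not expect a genuine obstacle. The one point needing care is that the identity must be stated with $X$ containing $0$, which ensures every set involved lies in $\mathcal{P}_{{\rm fin},0}(\mathbb{Z})$; this holds because all of $X$, $\{0,1\}$ and $\{0,-1\}$ contain $0$. No cancellation law is needed in the monoid, since we only compare the integers $\max$ and $\min$.
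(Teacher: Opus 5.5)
Your proposal is correct and is essentially the paper's proof: both use the absorption identity $X+k\{0,-1\}+k\{0,1\}=(k+x_-)\{0,-1\}+(k+x_+)\{0,1\}$ for large $k$, apply $f$, and compare maxima and minima using additivity of $\max$ and $\min$ over sumsets. The only difference is cosmetic: the paper takes $k\ge\max\{x_-,x_+\}$, while you take $n\ge x_-+x_+$, and both thresholds suffice.
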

\begin{proof}
Suppose that  $k\in \mathbb{N}$ is large enough such that $X\subseteq[\![-k,k]\!]$ (whenever $k\geq{\rm max} \{x_-, x_+\}$). Then
$$X+[\![-k,k]\!]=[\![-k-x_-,k+ x_+]\!].$$
Consequently, $$X+k[\![-1,0]\!]+k [\![0,1]\!]= (k+x_-)[\![-1,0]\!]+(k+x_+) [\![0,1]\!].$$
Applying $f$ on two sides of the equality, we have $$X^f+k([\![-1,0]\!]^f)+k ([\![0,1]\!]^f)= (k+x_-)([\![-1,0]\!]^f)+(k+x_+) ([\![0,1]\!]^f).$$
Comparing the maximum members in the sets  involving   the above equality, we have
$$   {\rm max} (X^f)+kd+kb=(k+x_-)d+(k+x_+)b,$$
which follows that $$   {\rm max} (X^f) =d x_-+bx_+.$$
Comparing the minimum members in the sets  involving   the above equality, we have
$$   {\rm min} (X^f)-kc-ka=-(k+x_-)c-(k+x_+)a,$$
which follows that $$   {\rm min} (X^f) =-c x_--ax_+.$$
The proof is completed.
\end{proof}
By considering the  action of $f^{-1}$ on  $\{0,1\}$, we find   that either $f$ or its reversal  fixes $\{0,1\}$  setwise.
\begin{lemma} Let $f$ be an automorphism of $\mathcal{P}_{{\rm fin},0}(\mathbb{Z})$. Then either  $\{0,1\}^f=\{0,1\}$ or $\{0,1\}^{\widehat{f}}=\{0,1\}$.
\end{lemma}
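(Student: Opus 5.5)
The plan is to apply Lemma 2.1 to both $f$ and $f^{-1}$, and to read off from the two resulting linear maps that the matrix of $f$ must be a permutation matrix.

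\emph{Setting up the matrices.} For an automorphism $g$, Lemma 2.1 says that the pair of extremes $(x_-,x_+)$ of $X$ is sent to the pair of extremes $(c x_-+a x_+,\ d x_-+b x_+)$ of $X^g$, in the sense that
\[
{\rm min}(X^g)=-(cx_-+ax_+),\qquad {\rm max}(X^g)=dx_-+bx_+ .
\]
Here $(x_-,x_+)$ ranges over $\mathbb{N}^2$, since $0\in X$, and both new coordinates are again in $\mathbb{N}$, since $0\in X^g$. So $g$ acts on extremes through the matrix
\[
M_g=\begin{pmatrix} c & a\\ d & b\end{pmatrix},
\]
which has nonnegative integer entries. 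Lemma 2.1 applies verbatim to $f^{-1}$, giving a second such matrix $M'=M_{f^{-1}}$ built from the analogous parameters $a',b',c',d'$ for $f^{-1}$.

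\emph{Composing.} We apply the lemma to $X^f$ under $f^{-1}$, taking $X=\{-1,0\}$ (extremes $(1,0)$) and $X=\{0,1\}$ (extremes $(0,1)$). Since $(X^f)^{f^{-1}}=X$, this yields $M'M_f e_i=e_i$ for both standard basis vectors, and hence $M'M_f=I$. Thus $M_f$ is a nonnegative integer matrix whose inverse is also nonnegative.

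\emph{Forcing a permutation matrix.} A standard check now shows $M_f$ is a permutation matrix. Each column of $M'$ and each row of $M_f$ is a nonzero nonnegative vector, since the matrices are invertible. The off-diagonal zeros of $M'M_f=I$ then force each row of $M_f$ to have exactly one positive entry. Using $\det M_f=\pm1$ together with integrality, that entry equals $1$. This step, which verifies $M'M_f=I$ and pins down the permutation structure, is the only real content of the proof, and I expect it to be routine.

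\emph{Reading off the two cases.}
\begin{itemize}
\item If $M_f=I$, then $c=b=1$ and $a=d=0$. So $\{0,1\}^f$ has minimum $0$ and maximum $1$, and therefore $\{0,1\}^f=\{0,1\}$.
\item If $M_f$ is the swap, then $a=d=1$ and $b=c=0$. In this case $\{0,1\}^{\widehat f}=(-\{0,1\})^f=\{-1,0\}^f$ has minimum $-c=0$ and maximum $d=1$. Hence $\{0,1\}^{\widehat f}=\{0,1\}$.
\end{itemize}
In both cases the final identification uses only that a subset of $[\![0,1]\!]$ with minimum $0$ and maximum $1$ must be $\{0,1\}$.
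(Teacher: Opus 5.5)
Your proof is correct, and it follows a genuinely different route from the paper's.

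The paper uses bijectivity only once. It takes a preimage $X_0$ of $\{0,1\}$, with ${\rm min}\ X_0=-x_-$ and ${\rm max}\ X_0=x_+$, and applies Lemma 2.1 to $f$ alone. This gives $cx_-+ax_+=0$ and $dx_-+bx_+=1$ in nonnegative integers. It then splits according to which of $dx_-$, $bx_+$ equals $1$. It uses $a+b>0$ (respectively $c+d>0$) to force the other extreme of $X_0$ to vanish. So $X_0$ is $\{-1,0\}$ or $\{0,1\}$, and $a,b,c,d$ are never fully determined.

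You instead apply Lemma 2.1 to both $f$ and $f^{-1}$, and evaluate $M'M_f$ on the extremes of $\{-1,0\}$ and $\{0,1\}$. This shows that $f$ acts on extremes through a nonnegative integer matrix whose inverse $M'=M_{f^{-1}}$ is also nonnegative and integral, hence a permutation matrix. Your sketch of that step is sound. A row $k$ of $M_f$ with two positive entries would force column $k$ of $M'$ to vanish. Then $\det M_f=\pm 1$ forces the positive entries of the two rows into different columns and makes them equal to $1$. It is worth writing out the different-columns point explicitly, since your sketch leaves it implicit.

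The paper's argument is shorter and needs only one linear system. Yours costs a little more but determines all of $a,b,c,d$ at once. In the case $M_f=I$ it gives Lemma 2.3(i) and (ii) for free: $\{-1,0\}^f=\{-1,0\}$, and $f$ preserves ${\rm min}$ and ${\rm max}$. The paper obtains these separately by rerunning the preimage argument for $\{-1,0\}$.
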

\begin{proof} Since $f$ is a bijective mapping on  $\mathcal{P}_{{\rm fin},0}(\mathbb{Z})$, there exists $X_0\in     \mathcal{P}_{{\rm fin},0}(\mathbb{Z})$ with ${\rm min }\ X_0= -x_-$ and ${\rm max}\ X_0= x_+$,
such that $X_0^f=\{0,1\}$. By Lemma 2.1, we have $$\left\{\begin{array}{ccccc}cx_-&+&ax_+&=&0 \\   dx_-&+&bx_+&=&1, \end{array}\right.$$
where $\{a,b,c,d,x_-,x_+\}\subset \mathbb{N}$ and $\{a+b, c+d, x_-+x_+\}\subset \mathbb{N}\setminus\{0\}$. It follows from $cx_-+ax_+=0$ that $cx_-=ax_+=0$.

The equality $dx_-+bx_+=1$ implies that, either $dx_-=1, bx_+=0$, or $dx_-=0, bx_+=1$.
If the front case happens, we claim that $  d=x_-=1$ and $c=x_+=0$. Indeed, since $dx_-=1$ and both $d$ and $x_-$ are non-negative integers, we have $d=x_-=1$. Moreover, $c=0$ (as $cx_-=0$) and it follows from $(a+b)x_+=0$ (as $ax_+=bx_+=0$) that $x_+=0$ (as $a+b>0$), which proves the claim. Consequently,  $X_0=\{-1,0\}$, confirming that  $\{0,1\}^{\widehat{f}}=\{-1,0\}^f=\{0,1\}$.
If the later case happens, similar discussion gives that  $\{0,1\}^f=\{0,1\}$.
\end{proof}

Before giving a proof for the main theorem, we list some properties for  an automorphism $f$ of   $\mathcal{P}_{{\rm fin},0}(\mathbb{Z})$ with  $\{0,1\}^f=\{0,1\}$.

\begin{lemma} Let $f$ be an   automorphism  of $\mathcal{P}_{{\rm fin},0}(\mathbb{Z})$. If $\{0,1\}^f=\{0,1\}$, then\\
(i)\ \   $\{-1,0\}^f=\{-1,0\}$;\\
(ii) \ \ ${\rm min}({X^f})={\rm min}\  {X} $ and ${\rm max}({X^f})={\rm max}\  {X}$ for any  $X\in \mathcal{P}_{{\rm fin},0}(\mathbb{Z})$;\\
(iii)\ \ $f$  fixes $\{-1,0,2\}$;\\
(iv) \ $f$ fixes each  $[\![-k,l]\!]$ with $k,l\in \mathbb{N}$;\\
(v)\ \ $X^f=X$ for any $X\in \mathcal{P}_{{\rm fin},0}(\mathbb{N})$;\\
(vi)\ $X^f=X$ for any $X\in \mathcal{P}_{{\rm fin},0}(\mathbb{-N})$.
\end{lemma}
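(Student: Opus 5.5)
Throughout, $f$ is an automorphism with $\{0,1\}^f=\{0,1\}$, so in the notation of Lemma 2.1 we have $a=0$ and $b=1$. The plan is to pin down $c,d$ first, which gives (i) and (ii) at once. Then (iii) and (iv) follow from min/max preservation together with the fact that $f$ fixes the two generators $\{-1,0\}$ and $\{0,1\}$. Finally (v) reduces to Theorem 1.1 plus one explicit test sumset, and (vi) reduces to (v) by conjugating with $f_0$.

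For (i), I would argue as in the proof of Lemma 2.2. By surjectivity choose $X_0$ with ${\rm min}\,X_0=-x_-$, ${\rm max}\,X_0=x_+$ and $X_0^f=\{-1,0\}$. Lemma 2.1 with $a=0,b=1$ gives
$$cx_-=1,\qquad dx_-+x_+=0.$$
Since all quantities lie in $\mathbb{N}$, this forces $c=x_-=1$, $x_+=0$ and $d=0$. Hence $X_0=\{-1,0\}$, so $\{-1,0\}^f=\{-1,0\}$. With $(a,b,c,d)=(0,1,1,0)$, Lemma 2.1 now reads ${\rm min}(X^f)={\rm min}\,X$ and ${\rm max}(X^f)={\rm max}\,X$, which is (ii).

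For (iv), write $[\![-k,l]\!]=k\{-1,0\}+l\{0,1\}$ and apply (i) together with the hypothesis. For (iii), note that the sets in $\mathcal{P}_{{\rm fin},0}(\mathbb{Z})$ with minimum $-1$ and maximum $2$ are exactly $\{-1,0,2\}$ and $[\![-1,2]\!]$. By (ii), $f$ and $f^{-1}$ permute this two-element family. Since $[\![-1,2]\!]$ is fixed by (iv), $\{-1,0,2\}$ must be fixed as well.

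For (v), observe that by (ii) both $f$ and $f^{-1}$ map sets with minimum $0$ to sets with minimum $0$. So $f$ restricts to an automorphism of $\mathcal{P}_{{\rm fin},0}(\mathbb{N})$, and by Theorem 1.1 this restriction is either the identity or $\sigma_0$. This is the main step: I must rule out $\sigma_0$ using an identity that mixes in negative elements. I would test with $\{0,2,3\}$, for which $\sigma_0(\{0,2,3\})=\{0,1,3\}$. A direct computation gives
$$\{-1,0,2\}+\{0,2,3\}=[\![-1,5]\!],\qquad \{-1,0,2\}+\{0,1,3\}=\{-1,0,1,2,3,5\}.$$
Suppose the restriction were $\sigma_0$. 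Applying $f$ to the first equality and using (iii) and (iv) would give $\{-1,0,2\}+\{0,1,3\}=[\![-1,5]\!]$. This is false, since $4$ is missing from the right-hand side of the second equality. Hence the restriction is the identity.

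For (vi), consider $g=f_0ff_0$, that is, $X\mapsto -\big((-X)^f\big)$. By (i), $\{0,1\}^g=-(\{-1,0\}^f)=\{0,1\}$, so (v) applies to $g$. For $X\subseteq-\mathbb{N}$ we have $-X\in\mathcal{P}_{{\rm fin},0}(\mathbb{N})$, so $(-X)^g=-X$. Since $(-X)^g=-(X^f)$ by the definition of $g$, this gives $X^f=X$.
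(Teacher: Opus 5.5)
Your proposal is correct and follows the paper's proof essentially step by step, including the same test sumset $\{-1,0,2\}+\{0,2,3\}=[\![-1,5]\!]$ to rule out $\sigma_0$ in (v) and the same conjugation by $f_0$ in (vi). The only local difference is in (iii): the paper excludes $\{-1,0,2\}^f=[\![-1,2]\!]$ because $[\![-1,2]\!]=\{-1,0\}+\{0,2\}$ is a sum of two non-trivial elements while $\{-1,0,2\}$ is not, whereas you prove (iv) first and use injectivity on the two-element family of sets with minimum $-1$ and maximum $2$, which is slightly simpler and equally valid.
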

\begin{proof} Assume that $Z_0^f=\{-1,0\}$. It follows from   Lemma 2.2 that  $$\left\{\begin{array}{ccccc}cz_-&+&az_+&=&1 \\   dz_-&+&bz_+&=&0 , \end{array}\right.$$ where  ${\rm min }\ Z_0=-z_-$ and ${\rm max}\ Z_0 =z_+$. Since $a=0$ and $b=1$, it is easy to see that   $c=z_-=1$ and $d=z_+=0$, which  proves that   $\{-1,0\}^f=\{-1,0\}$.

It follows from $\{0,1\}^f=\{0,1\}$ and $\{-1,0\}^f=\{-1,0\}$  that $a=d=0$ and $b=c=1$, thus   assertion (ii) follows   immediately from Lemma 2.1.

By assertion (ii), the maximum member in $\{-1,0,2\}^f$ is $2$ and the minimum one   is $-1$.
Thus $f$ sends $\{-1,0,2\}$ either to itself, or to $\{-1,0,1,2\}$. However, the later case is impossible, since  $\{-1,0,1,2\}=\{-1,0\}+\{0,2\}$ and $\{-1,0,2\}$ can not be decomposed into the sum of two non-trivial elements in $\mathcal{P}_{{\rm fin},0}(\mathbb{Z})$.

Note that $[\![-k,l]\!]=k\{-1,0\} +l\{0,1\}$. By applying $f$ to two sides of the equality, we have $$[\![-k,l]\!]^f=k\{-1,0\}^f +l\{0,1\}^f=k\{-1,0\} +l\{0,1\}=[\![-k,l]\!].$$

If $X\in  \mathcal{P}_{{\rm fin},0}(\mathbb{N})$, then ${\rm min}({X^f})={\rm min}\  {X}=0 $ and ${\rm max}({X^f})={\rm max}\  {X}\geq0$,
confirming that $X^f\in  \mathcal{P}_{{\rm fin},0}(\mathbb{N})$.
 By considering the action of $f^{-1}$,  we conclude that  $X^f\in  \mathcal{P}_{{\rm fin},0}(\mathbb{N})$ only if $X\in  \mathcal{P}_{{\rm fin},0}(\mathbb{N})$.
 Consequently,  the restriction of $f$ to  $\mathcal{P}_{{\rm fin},0}(\mathbb{N})$, written as $f^+$, is an automorphism of $\mathcal{P}_{{\rm fin},0}(\mathbb{N})$.
Thus $f^+$ is either the identity automorphism  of   $\mathcal{P}_{{\rm fin},0}(\mathbb{N})$, or $X^f={\rm max}\ {X}-X$ for any $X\in   \mathcal{P}_{{\rm fin,0}}(\mathbb{N})$ (by Theorem 1.1).
If the later case happens, keeping in mind that   $\{0,2,3\}^f=\{0,1,3\}$ and $f$ respectively fixes $\{-1,0,2\}$ and $  [\![-1, 5]\!]$ (by (iii) and (iv)),  we have
$$ \begin{array}{ccc} \{-1,0,1,2,3,5\}&=&\{-1,0,2\}+\{0,1,3\}= \{-1,0,2\}^f+\{0,2,3\}^f \ \ \  \\ &=& (\{-1,0,2\}+\{0,2,3\})^f = [\![-1, 5]\!] ^f=  [\![-1, 5]\!]. \ \ \  \   \end{array}$$
 The contradiction implies that  $f^+$ must be  the identity automorphism  of   $\mathcal{P}_{{\rm fin},0}(\mathbb{N})$  and  it fixes each element $X\in    \mathcal{P}_{{\rm fin},0}(\mathbb{N})$. The proof for (v) is completed.

  Note that $f_0ff_0$ fixes $\{0,1\}$ setwise. Applying (v) to $f_0ff_0$, we have $X^{f_0ff_0}=X$ for any $X\in    \mathcal{P}_{{\rm fin},0}(\mathbb{N})$, which follows that $(-X)^f=-X$ for any $X\in    \mathcal{P}_{{\rm fin},0}(\mathbb{N})$. Consequently,  $Y^f=Y$ for any $Y\in    \mathcal{P}_{{\rm fin},0}(\mathbb{-N})$.
  \end{proof}

Now, we are ready to give a proof for Theorem 1.3. The proof    is essentially an induction on the sum of the boxing dimension of a set $A \in   \mathcal{P}_{{\rm fin},0}(\mathbb{Z})$ and that of  $A^f$ (the technique is mainly borrowed from \cite{tri3}) Given a set $A \subset \mathbb{Z}$, we denote by ${\rm b.dim}(A)$ the smallest integer $k\geq 0$ for which there
exist $k$ (discrete) intervals whose union is $A$, with the understanding that if no such $k$
exists then  ${\rm b.dim}(A) =\infty$. We call   ${\rm b.dim}(A)$ the {\it boxing dimension} of $A$ (see \cite{tri2} for detail).
For example, the boxing dimension of $A=\{-5,-4,-2,0,1,5,6,7\}$ is $4$.

\vskip 3mm
{\bf Proof of Theorem 1.3}
\begin{proof} Let $f$ be an automorphism of $\mathcal{P}_{{\rm fin},0}(\mathbb{Z})$. By Lemma 2.2, either $\{0,1\}^f=\{0,1\}$  or $\{0,1\}^{\widehat{f}}=\{0,1\}$.
If the later case happens, we replace $f$ with  $\widehat{f}$, thus we can always assume that $\{0,1\}^f=\{0,1\}$. In what follows, we only need to prove that $f$ is the identity automorphism.
To achieve the goal, we need to prove that $A^f=A$ for an arbitrary $A\in   \mathcal{P}_{{\rm fin},0}(\mathbb{Z})$.  If $A\subset \mathbb{N}$ or $A\subset -\mathbb{N}$, it has been shown   that $A^f=A$. Now suppose ${\rm min}\  A=a<0$ and ${\rm max}\ A=b>0$.
We  proceed by induction on ${\rm b.dim}(A)+{\rm b.dim}(A^f)$ to complete the proof. If  ${\rm b.dim}(A)+{\rm b.dim}(A^f)=2$, then $A$ has boxing dimension $1$, i.e., $A=[\![a,b]\!]$ is an interval, it has been proved in Lemma 2.3 that $A^f=A$.
Assume  that $X^f=X$   for any  $X \in   \mathcal{P}_{{\rm fin},0}(\mathbb{Z})$ with  ${\rm b.dim}(X)+{\rm b.dim}(X^f)=q\geq  2$ and assume that $A\in   \mathcal{P}_{{\rm fin},0}(\mathbb{Z})$ satisfies    ${\rm b.dim}(A)+{\rm b.dim}(A^f)=q+1$. Let $ {\rm b.dim}(A)=r+1$ and ${\rm b.dim}(A^f)=s+1$. Thus $q+1=(r+1)+(s+1)$.
There exist increasing sequences $a=a_0, a_1, \ldots , a_{2r}, a_{2r+1}=b$  such that
$$A=[\![a_0,a_1]\!]\cup [\![a_2,a_3]\!]\cup \ldots\cup [\![a_{2u},a_{2u+1}]\!]\cup \ldots\cup  [\![a_{2r},a_{2r+1}]\!],$$
where $a_{2u-1}+2\leq a_{2u} \leq a_{2u+1}$  for $u=1,2, \ldots,  r$.
Write $A^f$ as $$A^f=[\![b_0,b_1]\!]\cup [\![b_2,b_3]\!]\cup \ldots\cup [\![b_{2v},b_{2v+1}]\!]\cup \ldots\cup  [\![b_{2s},b_{2s+1}]\!],$$
where  $b_{2v-1}+2\leq b_{2v}\leq b_{2v+1}$  for $v=1,2, \ldots,  s$, $b_0=a_0=a$ and $b_{2s+1}=a_{2r+1}=b$.

If we can prove $A^f=A$, then we are done.
Suppose for a contradiction that $A^f\not=A$. Then there is some $1\leq i\leq  {\rm min}\{2r, 2s\}$ such that $a_i\not=b_i$.
Let $v$ be the least index with $a_v\not=b_v$. Keep in mind that   $[\![x_1,y_1]\!]+[\![x_2,y_2]\!]=[\![x_1+x_2,y_1+y_2]\!]$.

{\sf Case I.} \ \ $v=2u$ for some $1\leq u\leq r$.

In this case, $a_i=b_i$ for $  i=0,\ldots, 2u-1$ and $a_{2u}\not=b_{2u}$. Assume that $a_{2u}<b_{2u}$ (if otherwise, we consider $f^{-1}$). Let $d=a_{2u}-a_{2u-1}-1$.
Then \begin{eqnarray}\begin{array}{ccc} A+[\![0,d]\!]&=&\bigcup\limits_{i=0}\limits^r([\![a_{2i},a_{2i+1}]\!]+[\![0,d]\!])=\bigcup\limits_{i=0}\limits^r[\![a_{2i},a_{2i+1}+d]\!]\\
&=&(\bigcup\limits_{i\in I}[\![a_{2i},a_{2i+1}+d]\!])\ \ \bigcup\ \  [\![a_{2u-2},a_{2u+1}+d]\!],\end{array}\end{eqnarray}
where $I=[\![0,r]\!]\setminus\{u-1,u\}$.
Considering $A^f+[\![0,d]\!]$, we have \begin{eqnarray}  A^f+[\![0,d]\!]=\bigcup\limits_{j=0}\limits^s([\![b_{2j},b_{2j+1}]\!]+[\![0,d]\!])=\bigcup\limits_{j=0}\limits^s[\![b_{2j},b_{2j+1}+d]\!]. \end{eqnarray}
Note that  ${\rm b.dim}(A+[\![0,d]\!])\leq   |I|+1\leq (r-1)+1=r$  and ${\rm b.dim}(A+[\![0,d]\!])^f={\rm b.dim}(A^f+[\![0,d]\!]) \leq s+1$, which implies that $${\rm b.dim}(A+[\![0,d]\!])+{\rm b.dim}(A+[\![0,d]\!])^f\leq r+s+1\leq q.$$ By the induction hypothesis we have \begin{eqnarray}  (A+[\![0,d]\!])^f=A+[\![0,d]\!].\end{eqnarray}
Combining (3) wit $(A+[\![0,d]\!])^f=A^f+[\![0,d]\!]^f, $
we have  $$A+[\![0,d]\!] =A^f+[\![0,d]\!].$$
This is a contradiction, since $a_{2u}\in A\subset A+[\![0,d]\!]$ and $a_{2u}\notin A^f+[\![0,d]\!]$ (note that $b_{2u-1}+d=a_{2u-1}+d=a_{2u}-1<a_{2u}<b_{2u}$).
\vskip 2mm
{\sf Case II.} \ \ $v=2u+1$ for some $1\leq u\leq r-1$.

  In this case, $a_i=b_i$ for $ i=0,1,\ldots, 2u$ and we assume $a_{2u+1}>b_{2u+1}$ (if $a_{2u+1}<b_{2u+1}$ we can replace $f$ with $f^{-1}$ to complete the proof).
Let $$c_0=-a+a_{2u+1}+{\rm max} \{a_{2r}-a_1,b_{2s}-b_1\},\ \   C=[\![-a+a_{2u+1}+1,c]\!]\ {\rm with}\ c\geq c_0.$$
Considering $A+C$, we have $$\begin{array}{ccc}A+C&=&\bigcup\limits_{i=0}\limits^r([\![a_{2i},a_{2i+1}]\!]+[\![-a+a_{2u+1}+1,c]\!])\ \ \ \ \ \ \ \ \ \  \ \ \ \ \  \ \ \ \ \ \ \ \ \ \ \ \ \ \ \ \ \ \ \ \ \ \ \ \\
&\subseteq&[\![a_{2u+1}+1,b+c]\!]\ \ \  \ \ \ \ \ \ \ \ \ \ \ \ \ \ \ \ \ \ \ \ \ \ \ \ \ \ \ \ \ \ \  \ \ \ \ \ \ \ \ \ \ \ \ \ \ \ \ \ \ \ \ \ \ \ \ \  \ \ \ \\\
&\subseteq&[\![a_0+(-a+a_{2u+1}+1),a_1+c]\!]\ \bigcup\ [\![a_1+c+1,b+c]\!]\ \ \ \ \ \ \ \  \ \ \ \ \ \ \ \  \\
&\subseteq&[\![a_0+(-a+a_{2u+1}+1),a_1+c]\!]\ \bigcup\ [\![a_{2r}+(-a+a_{2u+1}+1),b+c]\!]\\
&\subseteq&A+C,\ \ \ \ \ \ \ \ \ \ \ \ \ \ \ \ \ \ \ \ \ \ \ \ \ \ \ \ \ \ \ \ \ \ \ \ \ \ \ \ \ \ \ \ \ \ \ \ \ \  \ \ \ \ \ \ \ \ \ \ \ \ \ \ \ \ \ \ \ \ \ \ \ \ \ \ \end{array} $$
which leads to \begin{eqnarray}A+C=[\![a_{2u+1}+1,b+c]\!].\end{eqnarray}

Analogous computation for $A^f+C$ implies that $$\begin{array}{ccc}A^f+C&=&\bigcup\limits_{j=0}\limits^s([\![b_{2j},b_{2j+1}]\!]+[\![-a+a_{2u+1}+1,c]\!])\ \ \ \ \ \ \ \ \ \  \ \ \ \ \  \ \ \ \ \ \ \ \ \ \ \ \ \ \ \ \ \ \ \ \ \ \ \ \\
&\subseteq&[\![a_{2u+1}+1,b+c]\!]\ \ \  \ \ \ \ \ \ \ \ \ \ \ \ \ \ \ \ \ \ \ \ \ \ \ \ \ \ \ \ \ \ \  \ \ \ \ \ \ \ \ \ \ \ \ \ \ \ \ \ \ \ \ \ \ \ \ \  \ \ \ \\\
&\subseteq&[\![b_0+(-a+a_{2u+1}+1),b_1+c]\!]\ \bigcup\ [\![b_1+c+1,b+c]\!]\ \ \ \ \ \ \ \  \ \ \ \ \ \ \ \  \\
&\subseteq&[\![b_0+(-a+a_{2u+1}+1),b_1+c]\!]\ \bigcup\ [\![b_{2s}+(-a+a_{2u+1}+1),b+c]\!]\\
&\subseteq&A^f+C,\ \ \ \ \ \ \ \ \ \ \ \ \ \ \ \ \ \ \ \ \ \ \ \ \ \ \ \ \ \ \ \ \ \ \ \ \ \ \ \ \ \ \ \ \ \ \ \ \ \  \ \ \ \ \ \ \ \ \ \ \ \ \ \ \ \ \ \ \ \ \ \ \ \ \ \ \end{array} $$
which leads to \begin{eqnarray}A^f+C=[\![a_{2u+1}+1,b+c]\!].\end{eqnarray}

Furthermore, we consider $A+(\{0\}\cup C)$ and $A^f+(\{0\}\cup C)$, respectively. Set $D=\{0\}\cup C.$
Then \begin{eqnarray}\begin{array}{ccc}A+D&=&A\ \bigcup\  (A+C)  \ \ \ \ \ \ \ \  \ \ \ \ \  \ \ \ \ \ \ \ \ \ \ \ \ \ \ \ \ \ \ \ \ \ \ \ \  \ \ \    \\
&=& (\bigcup\limits_{j=0}\limits^r([\![a_{2j},a_{2j+1}]\!]) \ \bigcup \ [\![a_{2u+1}+1,b+c]\!]\ \ \  \ \ \ \  \   \ \\
&=&(\bigcup\limits_{j=0}\limits^{u-1}([\![a_{2j},a_{2j+1}]\!])\ \bigcup \   [\![a_{2u},b+c]\!].\ \ \ \ \  \ \ \ \ \ \ \ \ \ \ \
 \end{array} \end{eqnarray}
 \begin{eqnarray}\begin{array}{ccc}A^f+D&=& A^f\ \bigcup\  (A^f+C)  \ \ \ \ \ \ \ \  \ \ \ \ \  \ \ \ \ \ \ \ \ \ \ \ \ \ \ \ \ \ \ \ \ \ \ \     \\
&=&  (\bigcup\limits_{j=0}\limits^{s}([\![b_{2j},b_{2j+1}]\!])   \bigcup \ [\![a_{2u+1}+1,b+c]\!] \ \ \ \ \ \ \ \ \  \\
&=& (\bigcup\limits_{j=0}\limits^{s-1}([\![b_{2j},b_{2j+1}]\!]) \bigcup \ [\![h,b+c]\!],\ \ \  \  \ \  \ \ \ \ \ \ \ \ \ \ \ \ \   \ \\
  \end{array}\end{eqnarray} where $h={\rm min}\{b_{2s}, a_{2u+1}+1\}$.
It is easy to see  that ${\rm b.dim}(A+D)=  u+1\leq r$ and ${\rm b.dim}(A+D)^f={\rm b.dim}(A^f+D)\leq s+1$, which implies that $${\rm b.dim}(A+D)+{\rm b.dim}(A+D)^f\leq r+s+1\leq q.$$
Then by induction hypothesis we have $A+D=(A+D)^f=A^f+D^f=A^f+D$. This is a contradiction, since $b_{2u+1}+1\in A+D$ however $b_{2u+1}+1\notin A^f+D$ (keeping in mind that $ b_{2u+1}+1\notin A^f$, $ b_{2u+1}+1<a_{2u+1}+1$ and $b_{2u+1}+1<b_{2s}$).

The contradiction implies that $A^f=A$ for any $A\in \mathcal{P}_{\rm fin,0}(\mathbb{Z})$. Consequently, $f$ is the identity automorphism of $\mathcal{P}_{\rm fin,0}(\mathbb{Z})$ if $\{0,1\}^f=\{0,1\}$.

If  $\widehat{f}$ fixes $\{0,1\}$, then $\widehat{f}$  is the identity automorphism of $\mathcal{P}_{\rm fin,0}(\mathbb{Z})$. Thus $f=f_0$ and  $X^f=-X$ for any   $X\in \mathcal{P}_{\rm fin,0}(\mathbb{Z})$.
\end{proof}

\end{document}